\newtheorem{lemma}{Lemma}
\newtheorem{theorem}{Theorem}
\newtheorem*{theorem*}{Theorem}
\newtheorem{example}{Example}
\newtheorem{remark}{Remark}%
\title{Optimal 3D Road Alignment on Topographic Surfaces: A Convergent Dynamic Programming Approach}
\author[1]{M.~E. Abbasov\thanks{m.abbasov@spbu.ru, abbasov.majid@gmail.com}}
\author[1]{A.~A. Gorbunova\thanks{d-anyutik@mail.ru}}
\affil[1]{St. Petersburg State University, SPbSU, 7/9 Universitetskaya nab., St. Petersburg, 199034 Russia}
\begin{document}

\maketitle

\begin{abstract}We consider the problem of finding an optimal 3D road trajectory between two points on a terrain with variable elevation. Unlike common heuristic pathfinding methods, we propose a rigorous framework based on the calculus of variations, introducing an integral cost functional that incorporates material delivery and construction expenses. The existence of a global minimizer is established via the Arzel\`{a}--Ascoli theorem. To solve the problem numerically, we develop a dynamic programming scheme and provide a formal convergence proof. We prove that the sequence of piecewise-linear solutions converges to the true optimum when the grid discretization steps follow a specific power-law relation -- specifically, when the vertical step size decays faster than the horizontal one. To enhance efficiency, we introduce a local-search modification that reduces computational complexity to nearly quadratic \(O(\tau^{-2-\varepsilon})\), where \(\tau\) is the discretization step along the \(x\)-axis. Numerical experiments on 2D and 3D terrains validate the theoretical results, showing that our approach achieves accuracy comparable to the Ritz method while significantly reducing processing time.
\end{abstract}

\subsection*{Keywords}
3D road alignment; dynamic programming; calculus of variations; convergence analysis; path planning; terrain modeling; computational complexity; mathematical modelling; optimization

\subsection*{MSC} 49L20, 65K10, 90C39

\section{Introduction}\label{sec1}

The problem of constructing a road connecting two given points on the terrain naturally arises in civil engineering when designing infrastructure projects related to the laying of roads and railways, pipelines, water conduits, canals, power transmission networks \cite{AME_pp_appl,AME_Won_2015,AME_Wu_2005,AME_Kim_2016,AME_Zwickl-Bernhard_2023}. Such problems are usually solved by heuristic methods that involve the construction of a cost grid, i.e. the area under consideration is divided by a uniform grid into square subareas, each of which is assigned a certain price based on expert opinion or other assumptions \cite{AME_bib_1,AME_bib_2,AME_bib_3}. Thus, when constructing a route, we must move from the cell containing the start point to the cell corresponding to the end point, so that the transition from one cell to another is made exclusively along adjacent cells. This approach reduces the original problem to finding the minimum path on a graph connecting two given nodes. To solve the resulting problem, Dijkstra's algorithm \cite{AME_Dikstra,AME_opt_eng_1} is widely used. To obtain a more accurate solution, it is necessary to increase the density of the grid used, which dramatically increases the computational cost. To overcome these difficulties, researchers use heuristic modifications of Dijkstra's algorithm, such as the A* \cite{AME_A_star,AME_A_star2, AME_A_star3} or D* \cite{AME_opt_eng_2,AME_pp_appl2,AME_Dakulovic_2011} search algorithms. An alternative heuristic approach is based on the utilization of rapidly expanding random trees (RRT) \cite{AME_opt_RRT_ini, AME_opt_RRT_ini2, AME_opt_RRTstar, AME_RRT_connnect, AME_RRT_transit, AME_RRT_so-on, AME_RRT_so-on2,AME_Bruce_2003}.

The aforementioned approaches are not capable of finding a solution with the precision required. In order to address this issue, in paper \cite{AME_Abbasov_2021} a mathematical formalization of the problem was proposed, reducing it to a calculus of variations problem with fixed ends. The authors developed a cost functional, the minimum of which is achieved on the desired optimal trajectory. This novel approach enabled the employment of a vast range of theoretically well-founded methods to address the original problem. Specifically, the necessary minimum condition for the functional, expressed as an integro-differential equation, was obtained. Subsequent analysis invoking the Schauder fixed-point theorem established the existence of a solution to the equation \cite{AME_Abbasov_2024}.  Furthermore, a finite-difference numerical method was developed for solving the resulting boundary value problem. This method ensures the attainment of solutions with any desired accuracy. A  pivotal point of the proposed model was the assumption of negligible terrain elevation differences, thereby reducing the problem to a planar one. This, in turn, led to a substantial simplification of the calculations. In this paper, we eliminate this assumption by considering the trajectory in three-dimensional space. The model obtained through this approach is analyzed, and the existence of an optimum is demonstrated. 

Also in the current research proposes an algorithm for finding a global solution using the dynamic programming method, and its convergence is proven. Additionally, a modification of the method is built. In general, it finds only local solutions, but it significantly reduces computational costs. A comparison of the computational complexity of these two algorithms was carried out. The developed algorithms can be transferred to problems with constraints almost without changes.

It must be emphasized that, due to the specific shape of the graph employed in the problem, the computational complexity of Dijkstra's algorithm and the initially considered dynamic programming method are comparable. However, the latter allowed us to develop a more computationally efficient modification based on it, as well as to derive the convergence conditions for the sequence of solutions obtained for increasingly finer grids. This result enables the determination of the appropriate law for increasing grid density to achieve a solution with any predetermined level of accuracy. Thus, in contrast to the commonly employed Dijkstra's algorithm in this area, the approach developed and explored in this work facilitates the relatively computationally simple attainment of a solution to the original problem with any desired accuracy.

The paper is organized as follows: the problem description and formalization are given in Sect. 2. The dynamic programming scheme is presented in Sect. 3.  In Sect. 4 the existence of the solution and the convergence of the algorithm are proved. The local search modification is discussed in Sect. 5. Numerical experiments are provided in Sect. 6.

\section{Problem statement and formalization}\label{AM_AG_sec2}

Consider the problem of finding the trajectory of the road with the minimum construction cost connecting two given points on the relief of a terrain. Let us introduce the Cartesian coordinate system $Oxyz$. Without loss of generality, we can assume that the first point coincides with the origin. Denote the coordinates of the second point by $(l,y_l,z_l)$. Consider a function $z=\varphi(x,y)$ that defines the relief of the area under consideration. We assume that $\varphi(x,y)$ has continuous partial derivatives up to the second order. Since the endpoints are on the terrain, we have
$$\varphi(0,0)=0,\quad z_l=\varphi(l,y_l).$$

Obviously, the total cost of road construction depends on two factors:
\begin{itemize}
\item The cost of delivering the materials to the construction site, 
\item Wages of workers, cost of construction materials, as well as rental of construction equipment, etc.
\end{itemize}

Suppose that building materials are delivered from the starting point along the already constructed section of road. Introduce two functions.
\begin{itemize}
\item $\alpha\colon\mathbb{R}^2\to\mathbb{R}$ is a delivery cost of the materials needed to build a unit length of the road at the given point per unit length of the road. In other words, the delivery of materials needed to build one meter of road at point $(x,y)$ costs us $\alpha(x,y)$ per one meter of road. We assume that $\alpha(x,y)$ has continuous partial derivatives.
\item $\beta\colon\mathbb{R}^2\to\mathbb{R}$ is a salary of workers, cost of construction materials, as well as rent of construction equipment, etc., required to build the unit length of the roadway at the given point. We assume that $\beta(x,y)$ has continuous partial derivatives.
\end{itemize}

It should be noted that the trajectory is defined by function $y\colon\mathbb{R}^2\to\mathbb{R}$ as the altitude of the point $(x,y(x))$ can be found using $\varphi$ as $z=\varphi(x,y(x))$. 

The total cost of constructing the entire trajectory $y(x)$ is given by
\begin{equation}\label{AM_AG_J}
\begin{split}
J(y)=\int_{0}^{l}\alpha(x,y(x))\sqrt{1+{y'}^2(x)+{z'}^2(x)}\int_{0}^{x} \sqrt{1+{y'}^2(\xi)+{z'}^2(\xi)}d\xi dx\\
+\int_{0}^{l}\beta(x,y(x))\sqrt{1+{y'}^2(x)+{z'}^2(x)}dx,
\end{split}
\end{equation}
where $z(x)=\varphi(x,y(x))$ and ${z'}(x)=\varphi_x(x,y(x))+\varphi_y(x,y(x)){y'}(x)$.

Hence, we get the problem of finding the function $y(x)$ which minimizes the functional (\ref{AM_AG_J}) and satisfies boundary conditions 
\begin{equation}\label{AM_AG_init_cond}
y(0)=0,\quad y(l)=y_l. 
\end{equation}

\section{Dynamic programming scheme}\label{AM_AG_sec3}

It is clear from the problem statement that any part of the optimal trajectory is optimal. In other words, it doesn't matter how we get to a current point when we compute the subsequent parts of the optimal trajectory. This means that Bellman's principle of optimality is valid for our problem and therefore we can apply the dynamic programming scheme \cite{AME_ Bellman_1957,AME_ Moiseev_1971,AME_Sniedovich_2010}.

Let us introduce $\tau=l/n$ for some natural $n$ and divide interval $[0,l]$ by $n$ equidistant nodes $x_0=0$, $x_1=1/n$, $\dots$, $x_i=i\tau$, $\dots$, $x_n=l$. Denote the corresponding coordinates of the trajectory by $y_i$ and $z_i$, where $i=0,\dots,n$. Since the endpoints are fixed and the trajectory belongs to the graph of the function $\varphi$, we have 
$y_0=0$, $y_n=y_l$ and $z_i=\varphi(x_i,y_i)$ for all $i=0,\dots,n$. Thus, a piecewise linear approximation to the solution is defined by $y_i$, $i=1,\dots,n-1$. To formalize the problem, we can assume that the distance between the points $(x_i,y_i)$ and $(x_{i+1},y_{i+1})$ is defined by the functional (\ref{AM_AG_J}) is therefore given by the function
\begin{equation}\label{AM_AG_J_i}
\begin{split}
J_i(y_i,y_{i+1})=\int_{x_i}^{x_{i+1}}\alpha(x,y_i(x))\sqrt{1+{y'_i}^2+{z'_i}^2(x)}\int_{0}^{x} \sqrt{1+{y'_i}^2+{z'_i}^2(\xi)}d\xi dx\\
+\int_{x_i}^{x_{i+1}}\beta(x,y_i(x))\sqrt{1+{y'_i}^2+{z'_i}^2(x)}dx,
\end{split}
\end{equation}
where 
$$y_i(x)=y_i+\frac{y_{i+1}-y_i}{\tau}(x-x_i),$$
$$y'_i=\frac{y_{i+1}-y_i}{\tau},$$
$${z'_i}(x)=\varphi_x(x,y_i(x))+\varphi_y(x,y_i(x))y'.$$
So we need to find a polygonal chain of minimum length that connects two endpoints in a feasible region of the plane $Oxy$, or what is the same, we need to find the solution of the following additive nonlinear programming problem
$$\begin{cases}
\min &J(y_0,\dots,y_n)\\
\textit{s.t. } &y_i\in G_i,\ i=0,\dots,n,
\end{cases}$$
where 
$$J(y_0,\dots,y_n)=\sum_{i=0}^{n-1} J_i(y_i,y_{i+1}),$$ and
$G_i$ are some feasible sets. Note that $G_0=\{0\}$ and $G_n=\{y_l\}$ in our fixed ends problem, but they are not necessarily singletons in the general case. 

Denote 
$$\Pi_i=\left\{(x_i,y)\mid y\in G_i\right\},\ i=0,\dots,n,$$ and assume that we can compute the minimum distance $d(y_i)$ between any point $(x_{i},y_{i})$, $y_i\in G_i$ and $\Pi_0$. 
Choose an arbitrary $y_{i+1}\in G_{i+1}$ and introduce the distance between the point $(x_{i+1},y_{i+1})$ and set $\Pi_0$ as follows:
\begin{equation}\label{AM_AG_d_i}
d(y_{i+1})=\min_{y_i\in G_i}\left(d(y_i)+J_i(y_i,y_{i+1})\right),
\end{equation} 
where $d(y_0)=0$.
Since 
$$\min_{y_{0}\in G_{0},\dots,y_{i}\in G_{i}}J(y_0,\dots,y_n)=d(y_{i+1})+\sum_{j=i+1}^{n-1}J_j(y_j,y_{j+1}),$$
the part of optimal polygonal chain connecting $(x_{i+1},y_{i+1})$ with 
$\Pi_0$ must have a length equal to $d(y_{i+1})$. Any polygonal chain that does not satisfy this property cannot be the solution of the problem and should be discarded. Moving in this way from $\Pi_0$ to $\Pi_n$ at the $n$-th step, we define
\begin{equation}\label{AM_AG_d_n}
\min_{y_n\in G_n}d(y_n)
\end{equation}
and find the solution by choosing the polygonal chain on which the minimum (\ref{AM_AG_d_n}) is attained.

To build a numerical algorithm using this scheme, we need to discretize the problem by the parameter $y$. So we define a grid in the plane $Oxy$ with step $\tau$ with respect to the variable $x$ and step $\Delta$ with respect to $y$. Label the nodes of the grid as $(x_i,y^k_i)$, where $k$ is the number of the node in the set $\Pi_i$. The distance between two nodes $(x_i,y^k_i)$ and $(x_{i+1},y^j_{i+1})$, which are on neighboring sets $\Pi_i$ and $\Pi_{i+1}$ is defined as
\begin{equation}\label{AM_AG_d_kj}
d_i^{kj}=J_i(y^k_i,y^j_{i+1}).
\end{equation} 
Thus, our problem is reformulated as follows: among all the broken lines connecting $\Pi_0$ with $\Pi_n$ and having vertices in the nodes of the grid, we need to find the one with the minimum length.

Denoting the length of the broken line of the minimum length connecting $(x_i,y^k_i)$ with $\Pi_0$ by $d_i^k$, we can rewrite (\ref{AM_AG_d_i}) in the form
\begin{equation}\label{AM_AG_d_is}
d^s_{i+1}=\min_{k}\left(d^k_i+d_i^{ks}\right).
\end{equation} 
The minimum is taken over all those numbers $k$ for which the corresponding nodes belong to $\Pi_k$. After the last iteration we select the piecewise linear trajectory of the length 
\begin{equation}\label{AM_AG_d_n_discrete}
\min_{k} d_n^k,
\end{equation} 
which is obviously the solution of the problem. Note that for the fixed endpoints $\Pi_0$ and $\Pi_n$ are singletons and therefore step (\ref{AM_AG_d_n_discrete}) can be skipped.

\begin{figure}[h]
\centering
\includegraphics[width=0.75\textwidth]{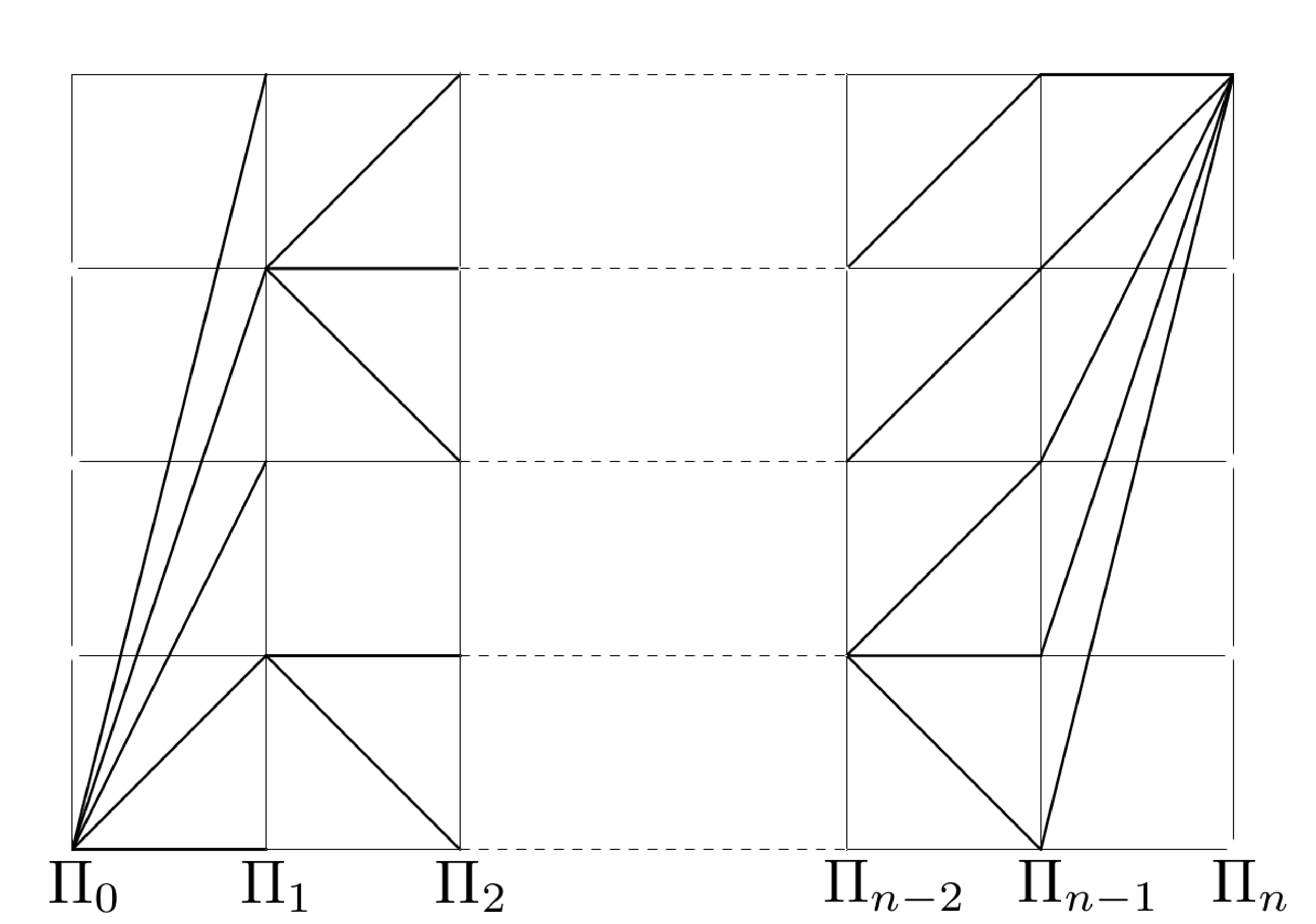}
\caption{An illustration of the dynamic programming algorithm for the considered problem}\label{AM_AG_fig1}
\end{figure}

The illustration of the algorithm for the problem is presented in Fig. \ref{AM_AG_fig1}. Bold segments indicate the optimal paths from the given point to $\Pi_0$.

\section{Existence of the solution and convergence of the algorithm}\label{AM_AG_sec4}

Let us discuss the existence of the solution. We are searching for a solution within the set $\mathbb{C}^1_L[0,l]$ of continuously differentiable functions on $[0,l]$, whose derivatives satisfy the Lipschitz condition with constant $L$. Denote
$$Y=\left\{f(x)\in \mathbb{C}^1_L[0,l]\colon f(0)=0,\ f(l)=y_l,\ |f'(0)|\leq L\right\}.$$
It can be proved that the family of functions $Y$ is uniformly bounded and equicontinuous. This implies that the conditions of the Arzela-Ascoli theorem are satisfied (see \cite{Lusternik_1975}), and therefore, the set $Y$ is relatively compact. It is also evident that $Y$ is closed, which implies that it is compact. 

\begin{theorem*}[Arzela-Ascoli]
A family of continuously differentiable functions $Y$ defined on $[0,l]$ is relatively compact in $\mathbb{C}^1[0,l]$ if and only if $Y$ is uniformly bounded and equicontinuous in $\mathbb{C}^1[0,l]$.
\end{theorem*}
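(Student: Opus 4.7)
The plan is to treat this as a classical Arzela--Ascoli statement lifted one derivative: a set is compact in $\mathbb{C}^1[0,l]$ iff, viewed together with its derivatives, it satisfies the hypotheses of the usual Arzela--Ascoli theorem in $\mathbb{C}[0,l]$. Accordingly, I would split the equivalence into the two implications and reduce each to the classical $\mathbb{C}[0,l]$ version, which I would take as known.

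For necessity, I would assume $Y$ is compact in $\mathbb{C}^1[0,l]$. Compactness in a metric space forces $Y$ to be totally bounded, so for every $\varepsilon>0$ there exist finitely many $f_1,\dots,f_N\in Y$ whose $\varepsilon$-balls in the $\mathbb{C}^1$-norm cover $Y$. This immediately gives a uniform bound on $\|f\|_\infty$ and $\|f'\|_\infty$ for $f\in Y$ (since each $\|f_i\|_{\mathbb{C}^1}$ is finite), and the standard finite-cover argument yields equicontinuity of $\{f : f\in Y\}$ as well as of $\{f' : f\in Y\}$: each of the finitely many $f_i$ and $f_i'$ is uniformly continuous on the compact interval $[0,l]$, and we transfer the modulus of continuity to an arbitrary $f\in Y$ using the $\varepsilon$-bound on $\|f-f_i\|_{\mathbb{C}^1}$.

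For sufficiency, I would take an arbitrary sequence $\{f_n\}\subset Y$ and extract a convergent subsequence in $\mathbb{C}^1[0,l]$. Since $\{f_n\}$ is uniformly bounded and equicontinuous, classical Arzela--Ascoli in $\mathbb{C}[0,l]$ yields a subsequence $\{f_{n_k}\}$ that converges uniformly to some $f\in \mathbb{C}[0,l]$. Applying the same classical theorem to $\{f_{n_k}'\}$, which is also uniformly bounded and equicontinuous by assumption, I extract a further subsequence $\{f_{n_{k_j}}'\}$ converging uniformly to some $g\in \mathbb{C}[0,l]$. To finish, I would use the fundamental theorem of calculus: for every $x\in[0,l]$,
\begin{equation*}
f_{n_{k_j}}(x)=f_{n_{k_j}}(0)+\int_0^x f_{n_{k_j}}'(t)\,dt,
\end{equation*}
and since uniform convergence commutes with integration, passing to the limit gives $f(x)=f(0)+\int_0^x g(t)\,dt$, whence $f\in \mathbb{C}^1[0,l]$ and $f'=g$. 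Therefore $f_{n_{k_j}}\to f$ in the $\mathbb{C}^1$-norm.

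The main technical point, and the only one that is not a direct citation of the $\mathbb{C}$-version of Arzela--Ascoli, is the final identification $f'=g$ together with the justification that $f\in \mathbb{C}^1[0,l]$; everything else reduces to either total boundedness of compact sets or two applications of the classical theorem. A secondary care point is to confirm that, in the paper's concrete set $Y$, the bound $|f''|\leq U$ indeed supplies equicontinuity of the derivatives through a uniform Lipschitz estimate $|f'(x)-f'(y)|\leq U|x-y|$, so that the hypotheses of the theorem are actually in force for the application envisaged in Section 4.
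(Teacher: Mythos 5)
The paper offers no proof of this statement at all: it is quoted as a classical theorem and attributed to the cited functional-analysis text of Lusternik and Sobolev, so there is nothing in the paper to compare your argument against. What you have written is the standard and correct reduction of the $\mathbb{C}^1[0,l]$ version to the classical $\mathbb{C}[0,l]$ Arzela--Ascoli theorem: total boundedness gives necessity, and two extractions plus the identity $f(x)=f(0)+\int_0^x g(t)\,dt$ give sufficiency with the correct identification $f'=g$. Your closing remark that $|f''|\leq U$ yields the Lipschitz bound $|f'(x)-f'(y)|\leq U|x-y|$ and hence equicontinuity of the derivatives is exactly the step the paper leaves implicit when it invokes the theorem in Section 4.

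One caveat is worth making explicit, because it affects both your proof and the paper's use of the theorem. Your sufficiency argument produces a subsequence converging in $\mathbb{C}^1[0,l]$ to some limit $f$, but it does not show $f\in Y$; uniform boundedness and equicontinuity only give \emph{relative} compactness, and genuine compactness additionally requires $Y$ to be closed in $\mathbb{C}^1[0,l]$. This is a defect of the statement as quoted rather than of your argument, but it is not vacuous here: the set $Y$ of Section 4 is cut out by the condition $f\in\mathbb{C}^2[0,l]$ with $|f''|\leq U$, and a $\mathbb{C}^1$-limit of such functions need only have a $U$-Lipschitz derivative, not a continuous second derivative, so that $Y$ is not obviously closed. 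If you want the extreme value theorem to apply as the paper intends, you should either pass to the $\mathbb{C}^1$-closure of $Y$ or note explicitly that the minimizing limit may lie in that closure rather than in $Y$ itself.
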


On the other hand, it is obvious that the functional $J(y)$ depends continuously on $y$. Then, Weierstrass Extreme Value Theorem guarantees the existence of the minimum of the functional (\ref{AM_AG_J}) on $Y$. Thus, we arrive at the following result.

\begin{theorem*}[Existence of a solution]
There exists a solution of the boundary value problem (\ref{AM_AG_J})--(\ref{AM_AG_init_cond}) on the set $Y$.
\end{theorem*}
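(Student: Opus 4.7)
The plan is to apply the Weierstrass extreme value theorem in the form "a real-valued continuous function on a nonempty compact set attains its minimum." The compactness piece has already been handed to us: the admissible class $Y$ is shown to be uniformly bounded and equicontinuous (both $f$ and $f'$ are uniformly bounded by $U$, and $|f''|\leq U$ furnishes the equicontinuity of $f'$), so by the Arzel\`a–Ascoli theorem stated just above, $Y$ is compact in $\mathbb{C}^1[0,l]$. It remains only to check that $Y$ is nonempty and that $J$ is continuous with respect to the $\mathbb{C}^1$-norm, after which the conclusion is immediate.

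Nonemptiness is essentially free: the straight line $y(x)=(y_l/l)x$ lies in $Y$ provided $U$ is chosen large enough relative to $|y_l|/l$, which is harmless since $U$ is a parameter of the admissible class. For continuity, I would fix two functions $y_1,y_2\in Y$ and estimate $|J(y_1)-J(y_2)|$ by using the assumed regularity of the data. Since $\varphi\in\mathbb{C}^2$ on a fixed compact rectangle $[0,l]\times[-U,U]$ and $\alpha,\beta\in\mathbb{C}^1$ there, the functions $\varphi_x,\varphi_y,\alpha,\beta$ are Lipschitz on this rectangle. Consequently, the composite maps $x\mapsto\alpha(x,y(x))$, $x\mapsto\beta(x,y(x))$, and
$$x\mapsto z'(x)=\varphi_x(x,y(x))+\varphi_y(x,y(x))y'(x)$$
depend Lipschitz-continuously on $y$ in the $\mathbb{C}^1$-norm, uniformly over $Y$. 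The square root $\sqrt{1+y'^2+z'^2}$ is smooth in its arguments and its argument is uniformly bounded away from $0$ and from $\infty$ on $Y$, so it is also Lipschitz-continuous in $y$ in the $\mathbb{C}^1$-norm. The outer and inner integrals preserve this continuity, and a routine product-rule bound gives $|J(y_1)-J(y_2)|\leq C\|y_1-y_2\|_{\mathbb{C}^1}$ for a constant $C$ depending only on $l$, $U$, and the data $\alpha,\beta,\varphi$.

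Once continuity is in hand, the Weierstrass theorem applied to $J:Y\to\mathbb{R}$ produces a minimizer $y^*\in Y$, which by construction satisfies the boundary conditions (\ref{AM_AG_init_cond}) and therefore solves the boundary value problem (\ref{AM_AG_J})–(\ref{AM_AG_init_cond}). The main technical obstacle is the continuity step, specifically handling the nested integral in the first term of $J$: a careful but standard argument is required to control both the outer integrand and the inner accumulated-length integral simultaneously. Neither step presents any conceptual difficulty, but the bookkeeping must be done honestly to make sure all the $\mathbb{C}^1$-Lipschitz bounds combine into a single uniform estimate.
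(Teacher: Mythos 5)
Your proposal follows exactly the same route as the paper: Arzel\`a--Ascoli gives compactness of $Y$ in $\mathbb{C}^1[0,l]$, $J$ is continuous in the $\mathbb{C}^1$-norm, and the extreme value theorem yields a minimizer. In fact you supply more detail than the paper does (the paper simply calls the continuity of $J$ ``obvious'' and omits the nonemptiness check), so the proposal is correct and, if anything, more complete.
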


If we want to get a more accurate solution, we need to increase the density of the grid. This can be done in various ways. How should one decrease the grid step size in order to ensure the convergence of the corresponding sequence of piecewise linear approximations to the solution of the initial problem?
We give the answer in the main result of this section. But before that, we need to derive two auxiliary lemmas.

Let $\{Q_k\}_{k=1}^\infty$ be a sequence of grids satisfying the following properties:
\begin{itemize}
\item  For each $k$, the grid $Q_k$ has only one node at the points $x=0$ and $x=l$ with coordinates $y=0$ and $y=y_l$ respectively;
\item $\Delta_k\to 0$ and $\tau_k\to 0$ as $k\to\infty$, where $\Delta_k$ and $\tau_k$ are the discretization steps of grid $Q_k$ with respect to $y$ and $x$, respectively. 
\end{itemize} 

Then, the problem of finding the optimal piecewise linear trajectory $z(x,k)$ on the grid $Q_k$, in accordance with the previously established notations, can be expressed as follows:
\begin{equation}\label{AM_AG_J_discrete_k}
\begin{cases}
\min &J(y_0,\dots,y_{n_k})\\
\textit{s.t. } &y_i\in G_i^k,\ i=0,\dots,n_k,
\end{cases}
\end{equation} 
where $n_k=\frac{l}{\tau_k}$ and $Q_k=\displaystyle\bigcup_{i=0}^{n_k}\Pi_i^k$, with $\Pi_i^k=\left\{(x_i,y)\mid y\in G_i^k\right\}$, $i=0,\dots,n_k$ and $G_i^k=\left\{y_i^s(k)\mid s=0,\dots,N_k\right\}$ for $i=1,\dots,n_k-1$. Additionally, $N_k=\frac{y_l}{\Delta_k}$, $G_0^k=\left\{0\right\}$, $G_{n_k}^k=\left\{y_l\right\}$.

\begin{lemma}\label{AM_AG_lem1}
Let $y^\ast(x)$ be the optimal solution of the problem (\ref{AM_AG_J})--(\ref{AM_AG_init_cond}) and $z(x,k)$ be the optimal piecewise linear solution of the problem (\ref{AM_AG_J_discrete_k}), and $y(x,k)$ be the best fitting piecewise linear approximation of $y^\ast(x)$ on grid $Q_k$ in the sense that
\begin{equation}\label{AM_AG_lem_1_def_best_fit_y*}
y(x_i,k)=\underset{y\in G_i^k}{\operatorname{argmin}}|y_\ast(x_i)-y|,\ \forall i=0,\dots,n_k,
\end{equation} 
i.e., this piecewise linear trajectory $y(x,k)$ is composed of vertices that are closest to the corresponding values of the function $y^\ast(x)$.
If 
\begin{equation}\label{AM_AG_lem_1_cond1}
\lim_{k\to\infty}\left|J(y(x,k))-J(y^\ast(x))\right|=0,
\end{equation} 
then the sequence $\{z(x,k)\}_{k=1}^\infty$ converges to $y^\ast(x)$ in the sense of the functional $J$, i.e.
\begin{equation}\label{AM_AG_lem_1_cond2}
\lim_{k\to\infty}\left|J(z(x,k))-J(y^\ast(x))\right|=0.
\end{equation} 
\end{lemma}

\begin{proof}
Indeed, since for any $k$ we have 
\begin{equation}\label{AM_AG_lem_1_proof_1}J(y^\ast(x))\leq J(z(x,k))\leq J(y(x,k)),
\end{equation} 
condition (\ref{AM_AG_lem_1_cond1}) implies (\ref{AM_AG_lem_1_cond2}) .
\end{proof}

It must be noted that via the technique presented in \cite{AME_Abbasov_2024} can also be used to prove the uniqueness of the solution of the problem (\ref{AM_AG_J})--(\ref{AM_AG_init_cond}). This requires finding a variation of the functional and deriving the necessary condition for the minimum. Considering that, we state the following result under the assumption of uniqueness of the solution.
\begin{lemma}
If the solution of the problem (\ref{AM_AG_J})--(\ref{AM_AG_init_cond}) is unique, then the sequence $\{z(x,k)\}_{k=1}^\infty$ converges uniformly to $y^\ast(x)$, i.e.
\begin{equation}\label{AM_AG_lem_1_cor_cond1}
\lim_{k\to\infty}\left\|z(x,k))-y^\ast(x)\right\|_{\mathbb{C}[0,l]}=0,
\end{equation} 
where $$\left\|z(x,k))-y^\ast(x)\right\|_{C[0,l]}=\max_{x\in[0,l]}\left|z(x,k))-y^\ast(x)\right|$$ is the norm of Banach space $\mathbb{C}[0,l]$.
\end{lemma}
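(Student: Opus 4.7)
The plan is to prove uniform convergence by a standard direct-method argument: argue by contradiction, extract a uniformly convergent subsequence via Arzela--Ascoli, and then exploit the assumed uniqueness of the minimizer of $J$ to identify its limit with $y^\ast(x)$.

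Suppose, toward a contradiction, that $\{z(\cdot,k)\}_{k=1}^\infty$ does not converge uniformly to $y^\ast$. Then there exist $\varepsilon_0>0$ and a subsequence $\{z(\cdot,k_j)\}$ with $\|z(\cdot,k_j)-y^\ast\|_{\mathbb{C}[0,l]}\geq\varepsilon_0$ for all $j$. By Lemma~\ref{AM_AG_lem1} the values $J(z(\cdot,k_j))$ converge to $J(y^\ast)$ and are in particular uniformly bounded. Since $\alpha$ and $\beta$ are continuous and positive on the compact working region, this bound translates into a uniform bound on the arc lengths $\int_0^l\sqrt{1+(y')^2+(z')^2}\,dx$ of the optimal polygonal chains and, combined with the boundedness of the feasible sets $G_i$, into uniform boundedness and equicontinuity of $\{z(\cdot,k_j)\}$ in $\mathbb{C}[0,l]$. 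Arzela--Ascoli then yields a sub-subsequence $z(\cdot,k_{j_m})$ converging uniformly on $[0,l]$ to some $\tilde y\in\mathbb{C}[0,l]$ that inherits the boundary conditions (\ref{AM_AG_init_cond}); after a further extraction one may also assume $z'(\cdot,k_{j_m})\to\tilde y\,'$ weakly in $L^2[0,l]$.

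I would then identify $\tilde y$ with $y^\ast$. Convexity of $\sqrt{1+\xi^2}$ in $\xi$ together with the continuity of $\alpha$, $\beta$, and $\varphi$ yields lower semicontinuity of $J$ along the extracted convergence, so
\begin{equation*}
J(\tilde y)\leq\liminf_{m\to\infty}J(z(\cdot,k_{j_m}))=J(y^\ast).
\end{equation*}
Since $y^\ast$ minimizes $J$ over the admissible class containing $\tilde y$, this forces $J(\tilde y)=J(y^\ast)$, and the assumed uniqueness of the minimizer then gives $\tilde y=y^\ast$, which contradicts $\|z(\cdot,k_j)-y^\ast\|_{\mathbb{C}[0,l]}\geq\varepsilon_0$.

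The hardest step is the lower-semicontinuity inequality. The integrand of (\ref{AM_AG_J}) contains a product of $\sqrt{1+(y')^2+(z')^2}$ with an inner integral of the same quantity, so passing simultaneously to the limit under both integrals --- given only uniform convergence of the functions $z(\cdot,k_{j_m})$ and weak-$L^2$ convergence of their derivatives --- requires a careful combination of Fatou's lemma for the outer integral with a classical convex lower-semicontinuity theorem (for example, an application of Mazur's lemma) for the inner one. Everything else, namely the precompactness argument, the extraction of convergent subsequences, and the conclusion drawn from uniqueness, is comparatively routine once this analytic point is secured.
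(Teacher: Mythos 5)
Your overall strategy (contradiction, compactness, identification of the limit via uniqueness) is a legitimate direct-method template, but two of its load-bearing steps fail as written and a third is only asserted. First, you invoke Lemma~\ref{AM_AG_lem1} to conclude $J(z(\cdot,k_j))\to J(y^\ast)$, but that lemma is conditional on hypothesis (\ref{AM_AG_lem_1_cond1}), namely that the best-fitting piecewise linear interpolants $y(x,k)$ of $y^\ast$ satisfy $J(y(x,k))\to J(y^\ast)$; this is not among the assumptions of the present lemma and must be proved. Establishing it (via uniform continuity of $y^\ast$ on $[0,l]$ and $\tau_k,\Delta_k\to 0$) is in fact the bulk of the paper's own proof, so you have skipped the main content rather than a routine verification. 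Second, your compactness step is incorrect: a uniform bound on the arc lengths of the polygonal chains bounds their total variation, but bounded variation does not imply equicontinuity (piecewise linear functions with one segment of slope $k$ over an interval of length $1/k$ have uniformly bounded arc length yet are not equicontinuous), so Arzela--Ascoli does not apply to $\{z(\cdot,k_j)\}$; likewise the derivatives are controlled only in $L^1$, not $L^2$, so the weak-$L^2$ extraction is unjustified. Third, the lower semicontinuity of $J$, complicated by the nested integral in (\ref{AM_AG_J}), is precisely the step you flag as hardest and leave as a sketch.

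The paper takes a different and more elementary route that never compactifies the sequence $\{z(\cdot,k)\}$ itself. It argues: if uniform convergence fails, uniqueness of the minimizer yields $J(z(x,k))>J(y^\ast)+a$ for some $a>0$ along the offending subsequence; on the other hand the sandwich (\ref{AM_AG_lem_1_proof_1}), $J(y^\ast)\leq J(z(x,k))\leq J(y(x,k))$, combined with the explicitly proved uniform convergence $y(x,k)\to y^\ast$ and hence $J(y(x,k))\to J(y^\ast)$, forces $J(z(x,k))<J(y^\ast)+a/2$ for large $k$, a contradiction. This sidesteps both the equicontinuity problem and the lower-semicontinuity problem entirely. (To be fair, the paper's deduction of $J(z(x,k))>J(y^\ast)+a$ from non-convergence is itself terse and tacitly relies on compactness of the admissible class $Y$ together with continuity of $J$; an argument of your flavor is genuinely needed there, but applied to the fixed compact class $Y$ rather than to the grid-dependent polygonal chains, where the requisite equicontinuity is unavailable.)
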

\begin{proof}
Assume the contrary. Since the solution $y^\ast(x)$ is unique for all $k$ we have
\begin{equation}\label{AM_AG_lem_1_cor_00}
J(z(x,k))>J(y^\ast(x))+a,
\end{equation} 
for some small $a>0$.

Let us show that
\begin{equation}\label{AM_AG_cor_eq0}
\left\|y(x,k)-y^\ast(x)\right\|_{\mathbb{C}[0,l]}\underset{k\to\infty}{\longrightarrow} 0,
\end{equation}
where $y(x,k)$ is the best fitting piecewise linear approximation of $y^\ast(x)$ on grid $Q_k$ as defined in (\ref{AM_AG_lem_1_def_best_fit_y*}).
We have 
$$y(i\tau_k,k))=y^\ast(i\tau_k)+O(\Delta_k), \quad \forall i=0,\dots,n_k,\quad \forall k\in\mathbb{N},$$
and for any $\bar{x}$ in $[0,l]$ and $k$ we can find $\bar{i}\in\{0,\dots,n_k-1\}$ such that $$\bar{i}\tau_k\leq \bar{x}\leq (\bar{i}+1)\tau_k.$$

Since the function $y^\ast(x)$ is continuous on the compact set $[0,l]$, it is uniformly continuous according to the Heine-Cantor theorem, i.e. for any $\varepsilon>0$ there exists $\delta>0$ such that for any $x'$ and $x''$ in $[0,l]$ satisfying the inequality $|x'-x''|\leq\delta$ the following condition holds: $|y^\ast(x')-y^\ast(x'')|\leq\varepsilon/2$.

We have $\tau_k\to 0$ and $\Delta_k\to 0$ as $k\to\infty$, therefore we can find $K$ such that for any $k>K$ the inequalities $\tau_k<\delta$ and $O(\Delta_k)<\varepsilon/2$ hold. For any $k>K$ we have
\begin{equation}\label{AM_AG_lem1_cor_1}
\begin{split}|y(\bar{x},k)-y^\ast(\bar{x})|\leq\max\{|y(\bar{i}\tau_k,k)-y^\ast(\bar{x})|,|y((\bar{i}+1)\tau_k,k)-y^\ast(\bar{x})|\}=\\
\max\{|y^\ast(\bar{i}\tau_k)-y^\ast(\bar{x})|,|y^\ast((\bar{i}+1)\tau_k)-y^\ast(\bar{x})|\}+|O(\Delta_k)|\leq\varepsilon.
\end{split}
\end{equation}
The inequality (\ref{AM_AG_lem1_cor_1}) was obtained for arbitrary $\bar{x}$ in $[0,l]$. Putting it all together, we get that for every $\varepsilon>0$ there exists $K$ such that for any $k>K$ the inequality
$$|y(\bar{x},k)-y^\ast(\bar{x})|\leq\varepsilon$$ is true for any $\bar{x}$ in $[0,l]$, which implies (\ref{AM_AG_cor_eq0}). Thus, piecewise linear approximations of the continuous function converge uniformly to the function as the number of approximations tends to infinity. Therefore we get
$$J(y(x,k))\underset{k\to\infty}{\longrightarrow} J(y^\ast(x)).$$
Then for sufficiently large $k$ we have 
$$J(y(x,k))\leq J(y^\ast(x))+\frac{a}{2}.$$
Then from (\ref{AM_AG_lem_1_proof_1}) and (\ref{AM_AG_lem_1_cor_00}) we get
$$J(y^\ast(x))+a< J(z(x,k))\leq J(y(x,k))<J(y^\ast(x))+\frac{a}{2},$$
which leads to the contradiction $a<\frac{a}{2}.$
\end{proof}

Therefore, according to Lemma \ref{AM_AG_lem1}, we must analyze the difference $$\Delta J_k=|J(y(x,k))-J(y^\ast(x))|$$ to prove the convergence of the dynamic programming scheme. We note that
\begin{equation}\label{AM_AG_pre_th_inequality}
\Delta J_k\leq\sum_{i=0}^{n_k-1}|J_i(y(i\tau_k,k),y((i+1)\tau_k,k))-J_i(y^\ast(i\tau_k,k),y^\ast((i+1)\tau_k,k))|.
\end{equation}

Now we can proceed to the main result.

\begin{theorem}\label{AM_AG_main_th}
Let the considered region $\Omega$, on which we solve the problem (\ref{AM_AG_J})--(\ref{AM_AG_init_cond}), be closed and bounded, and let there exist a twice continuously differentiable solution $y^\ast(x)\in \mathbb{C}^1(\Omega)$. If $\alpha(x,y)\in \mathbb{C}^1(\Omega)$, $\beta(x,y)\in \mathbb{C}^1(\Omega)$ and $\varphi(x,y)\in \mathbb{C}^2(\Omega)$, then for the proposed algorithm to converge to the solution of the problem it is sufficient that the grid steps satisfy the condition 
\begin{equation}\label{AM_AG_main_cond}
\Delta_k=\gamma\tau_k^{1+\varepsilon},
\end{equation} 
where $\gamma$, $\varepsilon$ are arbitrary positive constants and $k$ is the  iteration number.
\end{theorem}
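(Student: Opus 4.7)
The plan is to use Lemma \ref{AM_AG_lem1}: it suffices to show $\Delta J_k = |J(y(x,k)) - J(y^{\ast}(x))| \to 0$, and by the inequality (\ref{AM_AG_pre_th_inequality}) this reduces to controlling each summand $|J_i(y(i\tau_k,k), y((i+1)\tau_k,k)) - J_i(y^{\ast}(i\tau_k), y^{\ast}((i+1)\tau_k))|$. The strategy is to view $J_i$ as an integral over $[x_i, x_{i+1}]$ of a $C^1$ function of $(x, y, y')$ (multiplied by the inner arc-length integral, which is itself a bounded Lipschitz functional of the trajectory), and estimate the integrand differences pointwise using the smoothness hypotheses.

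First I would quantify how closely the piecewise linear approximation $y(x,k)$ tracks $y^{\ast}(x)$. By construction $|y(i\tau_k, k) - y^{\ast}(i\tau_k)| \leq \Delta_k/2$ at every node, and since $y^{\ast}\in \mathbb{C}^2(\Omega)$, Taylor's theorem gives that the linear interpolant of $y^{\ast}$ through consecutive nodes differs from $y^{\ast}$ on $[x_i,x_{i+1}]$ by $O(\tau_k^2)$ in $\mathbb{C}^0$ and by $O(\tau_k)$ in slope. Combining these two facts yields the uniform bounds
\begin{equation*}
\bigl|y(x,k) - y^{\ast}(x)\bigr| = O(\Delta_k + \tau_k^2), \qquad \bigl|y'(x,k) - y^{\ast \prime}(x)\bigr| = O\!\bigl(\Delta_k/\tau_k + \tau_k\bigr),
\end{equation*}
valid on each subinterval. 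Because $\alpha, \beta \in \mathbb{C}^1(\Omega)$ and $\varphi \in \mathbb{C}^2(\Omega)$, the integrand of $J_i$ (including the factor $z'(x) = \varphi_x + \varphi_y y'$ under the square roots) is Lipschitz in $(y, y')$ on the bounded region $\Omega$, so the pointwise difference of integrands is $O(\Delta_k/\tau_k + \tau_k)$.

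Next I would handle the inner integral $A(x) = \int_0^x \sqrt{1 + y'^2(\xi) + z'^2(\xi)}\,d\xi$ appearing in the first term of (\ref{AM_AG_J}). Since the integrand is bounded and Lipschitz in $(y, y')$ uniformly over the compact domain, the same pointwise bound propagates: $|A(x; y(\cdot,k)) - A(x; y^{\ast})| = O(\Delta_k/\tau_k + \tau_k)$ uniformly in $x \in [0,l]$. Multiplying by the bounded outer factor $\alpha \sqrt{1 + y'^2 + z'^2}$ preserves the estimate. Therefore each local error satisfies
\begin{equation*}
\bigl|J_i(y(i\tau_k,k), y((i+1)\tau_k,k)) - J_i(y^{\ast}(i\tau_k), y^{\ast}((i+1)\tau_k))\bigr| \leq C\,\tau_k \cdot O(\Delta_k/\tau_k + \tau_k) = O(\Delta_k + \tau_k^2).
\end{equation*}

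Finally, summing over the $n = l/\tau_k$ subintervals via (\ref{AM_AG_pre_th_inequality}) gives $\Delta J_k = O(\Delta_k/\tau_k + \tau_k)$. Substituting the grid relation $\Delta_k = \gamma\,\tau_k^{1+\varepsilon}$ yields $\Delta J_k = O(\tau_k^{\varepsilon} + \tau_k) \to 0$ as $k \to \infty$, and Lemma \ref{AM_AG_lem1} completes the argument. The main obstacle I anticipate is the careful bookkeeping on the double-integral term: one has to check that the Lipschitz estimate for the nested arc-length functional is truly uniform in $x$ and does not accumulate a factor growing with $n = l/\tau_k$. This is resolved by exploiting that the inner integral is taken over a fixed interval $[0,l]$ whose contribution is controlled by the \emph{uniform} pointwise difference of integrands rather than by a telescoping sum of local errors.
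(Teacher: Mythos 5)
Your proposal is correct and follows the same overall skeleton as the paper's proof: reduce via Lemma \ref{AM_AG_lem1} to showing $\Delta J_k\to 0$, split via (\ref{AM_AG_pre_th_inequality}) into per-interval errors, identify the slope error $O(\Delta_k/\tau_k)$ as the dominant contribution, and conclude that $\Delta_k=\gamma\tau_k^{1+\varepsilon}$ forces $\Delta J_k=O(\tau_k^{\varepsilon}+\tau_k)\to 0$. Where you genuinely differ is in how the per-interval estimate is executed. The paper first replaces $J$ by its left Riemann sum (accepting an $O(\tau)$ quadrature error), then performs an explicit first-order mean-value expansion of the discrete arc-length factor $\Phi(y_i,y_{i+1})$ around $(y_i^\ast,y_{i+1}^\ast)$, computing the partial derivatives $\Phi_{y_i}^{c}$, $\Phi_{y_{i+1}}^{c}$ in closed form and extracting the bound $c_1\Delta+c_2\Delta/\tau+c_3\Delta^2/\tau+c_4\Delta^2/\tau^2+c_5\Delta^2/\tau^3$. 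You instead stay at the level of the continuous integrals, using interpolation estimates ($|y-y^\ast|=O(\Delta_k+\tau_k^2)$, $|y'-y^{\ast\prime}|=O(\Delta_k/\tau_k+\tau_k)$) plus Lipschitz continuity of the integrand in $(y,y')$ on the compact region, and you correctly flag and resolve the one place where naive bookkeeping could go wrong, namely that the nested arc-length integral must be controlled by a uniform pointwise bound rather than an accumulating sum. Your route is shorter and avoids the explicit $\Phi$-derivative computation, and it yields a cleaner explicit rate; the paper's more laborious expansion buys the finer structure of the error (the $\Delta^2/\tau^2$ and $\Delta^2/\tau^3$ terms) that underlies Remark \ref{AM_AG_main_th_cor} about larger $\varepsilon$ giving better practical performance. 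One small caveat: your Lipschitz constants for the integrand depend on a uniform bound for the slopes $y'(x,k)$, which holds here because the competing trajectory is the best-fitting interpolant of the $\mathbb{C}^2$ solution; it is worth stating that explicitly, since an arbitrary grid polyline would not enjoy such a bound.
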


\begin{proof}
For convenience, we omit the $k$ indexing in the proof and use the notation $y_i$ for $y(i\tau)$, $i=0,\dots,n-1$.
Using the left Riemann sum for the numerical integration and the estimate for the error of this formula, we obtain
\begin{equation*}
\begin{split}
J(&y)=\sum_{i=0}^{n-1}\alpha(i\tau,y_i)\sqrt{1+\left(\frac{y_{i+1}-y_i}{\tau}\right)^2+\left(\varphi_x(i\tau,y_i)+\varphi_y(i\tau,y_i)\left(\frac{y_{i+1}-y_i}{\tau}\right)\right)^2}\\& \sum_{k=0}^{i}\sqrt{1+\left(\frac{y_{k+1}-y_k}{\tau}\right)^2+\left(\varphi_x(k\tau,y_k)+\varphi_y(k\tau,y_k)\left(\frac{y_{k+1}-y_k}{\tau}\right)\right)^2}\tau^2+\\ &\sum_{i=0}^{n-1}\beta(i\tau,y_i)\sqrt{1+\left(\frac{y_{i+1}-y_i}{\tau}\right)^2+\left(\varphi_x(i\tau,y_i)+\varphi_y(i\tau,y_i)\left(\frac{y_{i+1}-y_i}{\tau}\right)\right)^2}\tau+O(\tau).
\end{split}
\end{equation*}

Denoting $$\Phi(y_i,y_{i+1})=\sqrt{1+\left(\frac{y_{i+1}-y_i}{\tau}\right)^2+\left(\varphi_x(i\tau,y_i)+\varphi_y(i\tau,y_i)\left(\frac{y_{i+1}-y_i}{\tau}\right)\right)^2},$$ 
we can rewrite expression for $J$:
\begin{equation*}
J(y)=\sum_{i=0}^{n-1} J_i(y_i,y_{i+1})+O(\tau)
\end{equation*}
where 
$$J_i(y_i,y_{i+1})=\alpha(i\tau,y_i)\Phi(y_i,y_{i+1}) \sum_{j=0}^{i}\Phi(y_j,y_{j+1})\tau^2+\beta(i\tau,y_i)\Phi(y_i,y_{i+1})\tau.$$

Let us consider the expansion of the function $\Phi(y_i,y_{i+1})$ in the neighborhood of the point $(y^\ast_i,y^\ast_{i+1})$, where $y^\ast_i=y^\ast(i\tau)$, $i=0,\dots,n$ is the value of the solution in the $i$-th node of the grid.
\begin{equation*}
\begin{split}
\Phi(y_i,y_{i+1})=\Phi^\ast_i+\Phi_{y_i}^{c}h_i+\Phi_{y_{i+1}}^{c}h_{i+1},
\end{split}
\end{equation*}
where $$\Phi^\ast_i=\Phi(y^\ast_i,y^\ast_{i+1}),$$ $$\Phi_{y_i}^{c}=\frac{\partial}{\partial y_i}\Phi(y_i+\theta(y^\ast_i-y_i),y_{i+1}+\theta(y^\ast_{i+1}-y_{i+1})),$$
$$\Phi_{y_{i+1}}^{c}=\frac{\partial}{\partial y_{i+1}}\Phi(y_i+\theta(y^\ast_{i+1}-y_{i+1}),y_{i+1}+\theta(y^\ast_{i+1}-y_{i+1})),$$
and $\theta\in[0,1]$, $h_i=y_i-y^\ast_i$, for all $i=0,\dots,n$. We have 
\begin{equation}\label{AM_GA_expansion_J_i_init}
\begin{split}
J(y_i,y_{i+1})=&\left(\alpha(\tau i,y^\ast_i)+\alpha_y(\tau i,y^{c_\alpha}_i)h_i\right)\left(\Phi^\ast_i+\Phi_{y_i}^{c}h_i+\Phi_{y_{i+1}}^{c}h_{i+1}\right)\\ &\left(\sum_{j=0}^{i}\left(\Phi^\ast_j+\Phi_{y_j}^{c}h_j+\Phi_{y_{j+1}}^{c}\Delta_{j+1}\right)\tau^2\right)+\\&\left(\Phi^\ast_i+\Phi_{y_i}^{c}h_i+\Phi_{y_{i+1}}^{c}h_{i+1}\right)\left(\beta(\tau i,y^\ast_i)+\beta_y(\tau i,y^{c_\beta}_i)h_i\right)\tau,
\end{split}
\end{equation}
where $y^{c_\alpha}_i=y_{i}+\theta_\alpha(y^\ast_i-y_{i})$, $y^{c_\beta}_i=y_{i}+\theta_\beta(y^\ast_i-y_{i})$ and $\theta_\alpha$, $\theta_\beta$ are in $[0,1]$.

From (\ref{AM_GA_expansion_J_i_init}) we derive
\begin{equation}\label{AM_GA_expansion_J_i}
\begin{split}
J(y_i,&y_{i+1})=J(y^\ast_i,y^\ast_{i+1})+\\&\alpha_y(\tau i,y^{c_\alpha}_i)h_i\left(\Phi^\ast_i+\Phi_{y_i}^{c}h_i+\Phi_{y_{i+1}}^{c}h_{i+1}\right) \sum_{j=0}^{i}\left(\Phi^\ast_j+\Phi_{y_j}^{c}h_j+\Phi_{y_{j+1}}^{c}h_{j+1}\right)\tau^2+\\ &\alpha(\tau i,y^{\ast}_i)\left(\Phi_{y_i}^{c}h_i+\Phi_{y_{i+1}}^{c}h_{i+1}\right)\sum_{j=0}^{i}\left(\Phi^\ast_j+\Phi_{y_j}^{c}h_j+\Phi_{y_{j+1}}^{c}h_{j+1}\right)\tau^2+\\ &\alpha(\tau i,y^{\ast}_i)\Phi^\ast_i\sum_{j=0}^{i}\left(\Phi^\ast_j+\Phi_{y_j}^{c}h_j+\Phi_{y_{j+1}}^{c}h_{j+1}\right)\tau^2+\\&\left(\Phi_{y_i}^{c}h_i+\Phi_{y_{i+1}}^{c}h_{i+1}\right)\left(\beta(\tau i,y^\ast_i)+\beta_y(\tau i,y^{c_\beta}_i)h_i\right)\tau+\Phi^\ast_i\beta_y(\tau i,y^{c_\beta}_i)h_i\tau.
\end{split}
\end{equation}

Since the functions $\varphi_x$, $\varphi_y$, $\alpha$, $\alpha_y$, $\beta$, $\beta_y$ are continuous on the closed and bounded set $\Omega$, they are bounded on the set. 

Obviously, we have 
\begin{equation}\label{AM_GA_expansion_Phi_i_}
\begin{split}
\Phi_{y_i}^{c}=&\frac{1}{\Phi(y_i^\ast+\theta h_i,y_{i+1}^\ast+\theta h_{i+1})}\Bigg[\frac{y_i^\ast-y_{i+1}^\ast+\theta(h_i-h_{i+1})}{\tau}+\\&\bigg(\varphi_{x}(i\tau,y_i^\ast+\theta h_i)-\varphi_{y}(i\tau,y_i^\ast+\theta h_i)\frac{y_i^\ast-y_{i+1}^\ast+\theta(h_i-h_{i+1})}{\tau}\bigg)\\&\bigg(\varphi_{xy}(i\tau,y_i^\ast+\theta h_i)-\varphi_{yy}(i\tau,y_i^\ast+\theta h_i)\frac{y_i^\ast-y_{i+1}^\ast+\theta(h_i-h_{i+1})}{\tau}\bigg)-\\&
\varphi_{y}(i\tau,y_i^\ast+\theta h_i)\frac{1}{\tau}\Bigg],
\end{split}
\end{equation}
\begin{equation}\label{AM_GA_expansion_Phi_i+1}
\begin{split}
\Phi_{y_{i+1}}^{c}=&\frac{1}{\Phi(y_i^\ast+\theta h_i,y_{i+1}^\ast+\theta h_{i+1})}\Bigg[\frac{y_{i+1}^\ast-y_i^\ast+\theta(h_{i+1}-h_i)}{\tau}+\\&\bigg(\varphi_{x}(i\tau,y_i^\ast+\theta h_i)-\varphi_{y}(i\tau,y_i^\ast+\theta h_i)\frac{y_i^\ast-y_{i+1}^\ast+\theta(h_i-h_{i+1})}{\tau}\bigg)\\&
\varphi_{y}(i\tau,y_i^\ast+\theta h_i)\frac{1}{\tau}\Bigg],
\end{split}
\end{equation}
whence, noting that $|h_i|\leq\Delta$ for all $i=0,\dots,n,$ where $\Delta$ is the step of the grid with respect to the variable $y$,  we come to the following inequalities which are valid under the conditions of the theorem 
\begin{equation}\label{AM_GA_expansion_Phi_i}
\begin{cases}
\Phi^\ast_i\leq c_0,\\
\Phi_{y_i}^{c}h_i+\Phi_{y_{i+1}}^{c}h_{i+1}\leq c_1\Delta+c_2\frac{\Delta}{\tau}+c_3\frac{\Delta^2}{\tau}+c_4\frac{\Delta^2}{\tau^2}+c_5\frac{\Delta^2}{\tau^3},\\
i=1,\dots,n-1,
\end{cases}
\end{equation}
where $c_0,\dots,c_5$ are constants.

Inequality  (\ref{AM_AG_pre_th_inequality}) can be rewritten as
$$\Delta J\leq\sum_{i=0}^{n-1}|J_i(y_i,y_{i+1})-J_i(y_i^\ast,y_{i+1}^\ast)|+O(\tau).
$$
So, using (\ref{AM_GA_expansion_J_i}) and (\ref{AM_GA_expansion_Phi_i_}), (\ref{AM_GA_expansion_Phi_i+1}), noting that $n\tau=1$, we conclude that for $\Delta J$ to tend to zero as the density of the grid increases, it is sufficient that $\Delta=\gamma\tau^{1+\varepsilon}$ for some positive constants $\gamma$ and $\varepsilon$, i.e. $\Delta$ tends to zero faster than $\tau$.
\end{proof}

\begin{remark}\label{AM_AG_main_th_cor}
From the inequalities (\ref{AM_GA_expansion_Phi_i}) we see that the upper estimates for the expressions $\left|\Phi_{y_i}^{c}\Delta_i+\Phi_{y_{i+1}}^{c}\Delta_{i+1}\right|$ for all $i=1,\dots,n$ become smaller for the higher values of $\varepsilon$, while the convergence is not guaranteed for $\varepsilon=0$.
\end{remark}

Let us get an estimate for the number of arithmetic operations $T$ needed to find the solution by the proposed method. Denote 
by $N=y_l/\Delta$ the number of nodes with respect to the variable $y$. Assuming that each calculation using the formula (\ref{AM_AG_d_kj}) requires $p-1$ arithmetic operations, we can write 
\begin{equation}\label{AM_GA_estimate_arithm_op_init_alg}
T\leq\sum_{i=0}^{n-1}pN^2=pN^2n.
\end{equation} 
Noticing that $n=l/\tau$ and using the Theorem \ref{AM_AG_main_th}, we continue the chain (\ref{AM_GA_estimate_arithm_op_init_alg}) as
\begin{equation}\label{AM_GA_estimate_arithm_op_init_alg_tau}
T\leq\frac{p}{\gamma^2} \frac{1}{\tau^{3+2\varepsilon}}=O\left(\frac{1}{\tau^{3+2\varepsilon}}\right).
\end{equation} 
\\
\\

At the end of this section, let us consolidate all the reasoning and present the dynamic programming algorithm for the problem (\ref{AM_AG_J})--(\ref{AM_AG_init_cond}) (see Algorithm \ref{AM_DP_alg_init}). Let $\tau$, $\Delta$ be some positive numbers and $n=l/\tau$, $N=y_l/\Delta$. Denote the grid by $Q$, i.e. 
$$Q=\left\{(x_i,y_j)\mid i\in I,\ j\in W_i\right\},$$
$I=\{0,\dots,n\}$, $W_0=\{0\}$, $W_{n}=\{N\}$, $W_i=\{0,\dots,N\}$ for $i$ in $\{1,\dots,n-1\}$.

\begin{algorithm}[H]
\caption{Dynamic programming algorithm for the problem (\ref{AM_AG_J})--(\ref{AM_AG_init_cond})}\label{AM_DP_alg_init}
\begin{algorithmic}[1]
\Require The grid $Q$ and the functions $J_i$ defined in (\ref{AM_AG_J_i}).
\Ensure  Piecewise-linear minimizer $z_\ast(x)$ of the functional $J$ on the grid $Q$.
\State $d_0^0=0$%$ \Leftarrow 1$
\For{$i\in I\setminus\{n\}$} 
   %\State 
         \For{$j_1\in W_{i+1}$} 
               %\State 
                       \For{$j_2\in W_{i}$} 
                             \State $\displaystyle d_i^{j_1j_2}=J_i(y_i^{j_2},y_{i+1}^{j_1})$
                        \EndFor
                        
           $d_{i+1}^{j_1}=\displaystyle\min_{s\in W_i}\left(d_i^s+d_i^{sj_1}\right)$
           
          \EndFor
\EndFor
\State Take the nodes of the grid $Q$ at which the distance $d_n^m$ is attained as the vertices of the piecewise linear function defining the solution $z_\ast(x)$.
\end{algorithmic}
\end{algorithm}

\begin{remark}\label{AM_AG_Alg1_Rem} Note that, via Algorithm \ref{AM_DP_alg_init} we can construct a sequence of the piecewise-linear approximations $\{z_\ast^k(x)\}_{k=1}^\infty$ that converges to the optimal solution of the problem (\ref{AM_AG_J})--(\ref{AM_AG_init_cond}). In order to do so, according to Theorem \ref{AM_AG_main_th}, we need to define a sequence $\{\tau_k\}$ of positive numbers tending to zero, take $\Delta_k=\tau_k^{1+\varepsilon}$ for some $\varepsilon>0$ and then by means of Algorithm \ref{AM_DP_alg_init} for any natural $k$ construct $z_\ast^k(x)$ on the grid $Q_k$ which is defined by step sizes $\tau_k$, $\Delta_k$.
\end{remark}

\section{Local search modification}\label{AM_AG_sec5}
This section outlines several options that make it faster to reach a solution using the described method.

\subsection{Modification for the local search}\label{AM_AG_sec5_1}
Algorithm \ref{AM_DP_alg_init} enables the finding of a global solution to the problem, but requires a high computational expense. It is possible to achieve a significant reduction in the computational costs spent on obtaining a solution $z_\ast(x)$ if a local solution is sought instead of a global one.

Let $\Gamma_k$ be a broken line which approximates the solution of the problem (\ref{AM_AG_J})--(\ref{AM_AG_init_cond}) and defined by nodes $\left(x_i,y_{j_i(k)}\right)$,  where $y_{j_i(k)}\in G_i$ for any $i=0,\dots,n$, i.e.
$$\Gamma_k=\left\{\left(x_i,y_{j_i(k)}\right)\mid i\in I,\ j_i(k)\in W_i\right\},$$
where $I=\{0,\dots,n\}$, $W_0=\{0\}$, $W_{n}=\{N\}$, $W_i=\{0,\dots,N\}$ for $i$ in $\{1,\dots,n-1\}$.

To build $\Gamma_{k+1}$ in each $\Pi_i$ we consider only $m$ nearest points to the node $\left(x_i,y_{j_i(k)}\right)$, $i=0,\dots,n$ and run previously stated algorithm on the truncated grid. Obviously, we get a new broken line $\Gamma_{k+1}$ in the neighborhood of $\Gamma_k$. The proposed evolution of the approximation can bring us to the piecewise-linear approximation of a local solution of the problem (\ref{AM_AG_J})--(\ref{AM_AG_init_cond}) (see Algorithm \ref{AM_DP_alg_local_search_modification}). 

\begin{figure}[h]
\centering
\includegraphics[width=0.7\textwidth]{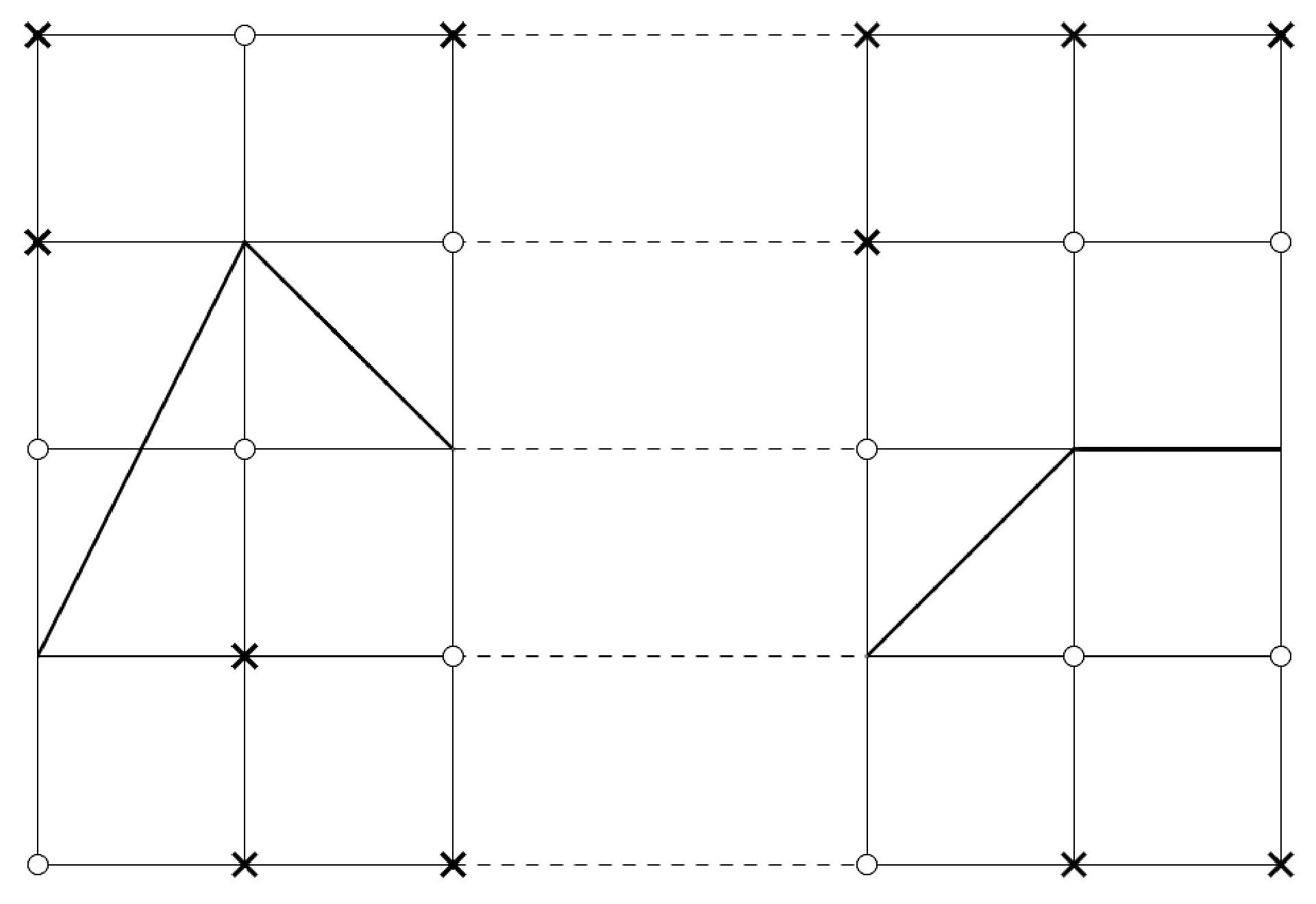}
\caption{An illustration of the algorithm for finding a local solution. Circles indicate feasible points at this iteration, crosses indicate infeasible points}\label{AM_AG_fig2}
\end{figure}
In Fig. \ref{AM_AG_fig2} we have $m=2$ and nodes of the truncated grid are represented by circles.

Applying the same reasoning as we used in the end of previous section we can get an estimate for the number of arithmetic operations  $T_j$ which are needed on  $j$-th iteration as
\begin{equation}\label{AM_GA_estimate_arithm_op_modif_alg}
T_j\leq p m^2n.
\end{equation} 
The total number of iterations  $T$ is less or equal to $\tilde{p}N/m$, where $\tilde{p}$ is a constant. This fact is acknowledged by numerical experiments and is quite intuitive because the bigger $m$ the less is the number of total iterations (see \cite{AME_ Moiseev_1971}). If $m=N$, we have only one iteration, as our modification coincides with the original algorithm. Therefore, the total number of arithmetic operations here is
\begin{equation}\label{AM_GA_estimate_arithm_op_modif_alg_tau}
T\leq \tilde{p}pmNn=O\left(\frac{1}{\tau^{2+\varepsilon}}\right),
\end{equation}
which is less than (\ref{AM_GA_estimate_arithm_op_init_alg_tau}).\\

Let us state the modification of dynamic programming algorithm for the problem (\ref{AM_AG_J})--(\ref{AM_AG_init_cond}) in the same manner as we did for its initial variant. Let $\tau$, $\Delta$ be some positive numbers  and $n=l/\tau$, $N=y_l/\Delta$. Denote the grid by $Q$, i.e. 
$$Q=\left\{(x_i,y_j)\mid i\in I,\ j\in W_i\right\},$$
$I=\{0,\dots,n\}$, $W_0=\{0\}$, $W_{n}=\{N\}$, $J_i=\{0,\dots,N\}$ for $i$ in $\{1,\dots,n-1\}$. Let also $z_k(x)$ be a piecewise-linear approximation of the solution on the grid $Q$ on $k$-th iteration and $\Gamma_k$ the corresponding broken line, i.e. $z_k(x_i)=y_{j_i(k)}$. 

\begin{algorithm}[H]
\caption{Modified dynamic programming algorithm for the problem (\ref{AM_AG_J})--(\ref{AM_AG_init_cond})}\label{AM_DP_alg_local_search_modification}
\begin{algorithmic}[1]
\Require Natural number $m$ and initial piecewise-linear approximation $z_0(x)$ of the solution on the grid $Q$, that is, the piecewise linear function defined by $z_0(x)$ has only the vertices $\{(x_i,z_0(x_i))\mid i\in I\}$, and all of them are contained within $Q$.
\Ensure Piecewise-linear minimizer $z_\ast(x)$ of the functional $J$ on the grid $Q$
\State $k \Leftarrow 0$
\Do 
\State Construct sets $W_i(k)=\left\{j_i(k)-m,\dots,j_i(k),\dots,j_i(k)+m\right\}$ for any $i\in I$, where all indices less than zero or higher than $N$ should be omitted.
\State Define grid $Q_k=\left\{(x_i,y_j)\mid j\in W_i(k)\right\}$
\State Run Algorithm \ref{AM_DP_alg_init} on $Q_k$ and get $z_{k+1}(x)$ as a result
\State $k \Leftarrow k+1$
\doWhile{ there exists $i\in I$ such that $z_{k}(x_i)\neq z_{k-1}(x_i)$}
\end{algorithmic}
\end{algorithm} 

\begin{remark}\label{AM_AG_Alg2_Rem} Algorithm \ref{AM_DP_alg_local_search_modification} can be built in each iteration of the algorithm described in Remark \ref{AM_AG_Alg1_Rem}, and in this case, it generally leads us to a local solution of the problem (\ref{AM_AG_J})--(\ref{AM_AG_init_cond}).
\end{remark}

\subsection{2D simplification of the model}\label{AM_AG_sec5_2}
Another potential way to reduce the computational complexity of the proposed solution is in cases where the height difference over the terrain under consideration is insignificant. In this case $z'(x)$ is negligible and the cost functional (\ref{AM_AG_J}) can be rewritten as
\begin{equation}\label{AM_AG_J2D}
\begin{split}
J(y)=\int_{0}^{l}\alpha(x,y(x))\sqrt{1+{y'}^2(x)}\int_{0}^{x} &\sqrt{1+{y'}^2(\xi)}d\xi dx+\\
&\int_{0}^{l}\beta(x,y(x))\sqrt{1+{y'}^2(x)}dx.
\end{split}
\end{equation}
More detail regarding this problem can be found in  \cite{AME_Abbasov_2021,AME_Abbasov_2024}.

Discretizing (\ref{AM_AG_J2D}) we get
\begin{equation}\label{AM_AG_J_i_2D}
\begin{split}
J_i(y_i,y_{i+1})=\int_{x_i}^{x_{i+1}}\alpha(x,y_i(x))\sqrt{1+{y'_i}^2}\int_{0}^{x}& \sqrt{1+{y'_i}^2}d\xi dx+\\
&\int_{x_i}^{x_{i+1}}\beta(x,y_i(x))\sqrt{1+{y'_i}^2}dx.
\end{split}
\end{equation}

\section{Numerical experiments}\label{AM_AG_sec5}
Let us apply the proposed algorithms to 2D and 3D problems to demonstrate their results. We will use the results of the Ritz method as a benchmark for comparison. The system of basis functions for the Ritz method is $\left\{\sin{\frac{\pi k x}{l}}\right\}_{k=1}^\infty$. 

\begin{example} \label{AM_GA_exmpl1}
Consider the problem of minimizing (\ref{AM_AG_J2D}) for $$\alpha(x,y)=\cos^2{5x}\cos^2{y},$$ 
$$\beta(x,y)=1+\sin{5x}\sin{y},$$
and boundary conditions $$y(0)=0, \ y(l)=1,$$
where $l=1$.

The results of Algorithm \ref{AM_DP_alg_init} for different values of the parameters $\tau$ and $\varepsilon$ are presented in Table \ref{AM_GA_tab1_1}. Here, $\Delta=\tau^{1+\varepsilon}$ is the step size of the grid with respect to the variable $y$, as derived in Theorem \ref{AM_AG_main_th}. Increasing the parameter $\varepsilon$ guarantees an increase in computation time.

\begin{table}[h]
\caption{Results of numerical experiments in Example \ref{AM_GA_exmpl1} for Algorithm \ref{AM_DP_alg_init}}\label{AM_GA_tab1_1}
\begin{tabular*}{\textwidth}{@{\extracolsep\fill}lcccccccc}
\toprule%
& \multicolumn{2}{@{}c@{}}{$\varepsilon=0$} & \multicolumn{2}{@{}c@{}}{$\varepsilon=0.25$}& \multicolumn{2}{@{}c@{}}{$\varepsilon=0.5$}& \multicolumn{2}{@{}c@{}}{$\varepsilon=0.75$}  \\\cmidrule{2-3}\cmidrule{4-5}\cmidrule{6-7}\cmidrule{8-9}%
$\tau$ & $J$ & time\footnotemark[1] & $J$  &   time\footnotemark[1]  & $J$ &  time\footnotemark[1]& $J$  &   time\footnotemark[1]     \\
\midrule
$1/4$   & 1.54301 & 0.01  & 1.49633  & 0.01  & 1.48259 & 0.02   & 1.48152 & 0.04\\
$1/8$   & 1.51742 & 0.05  & 1.45310  &  0.14   & 1.44712 & 0.36   & 1.44254 & 0.92\\
$1/16$ & 1.51455 & 0.40  & 1.44337  & 1.56 &  1.44010 & 6.02& 1.43899 & 23.52\\
$1/32$ & 1.51129 & 3.31 & 1.43833 & 18.42& 1.43368 & 100.6& 1.43327 & 535.7\\
\bottomrule
\end{tabular*}
\footnotetext[1]{Time of computations in seconds.}
\end{table}

Now, we proceed to Algorithm \ref{AM_DP_alg_local_search_modification}. Our experiments have shown that the choice of the parameter $m$ for this algorithm has no effect on the final results compared to the parameters $\tau$ and $\varepsilon$. Higher values of $m$ only increase the computational complexity and do not influence the obtained trajectory cost. Therefore, we perform our computations only for $m=1$. The results are presented in Table \ref{AM_GA_tab1_2}.

\begin{table}[h]
\caption{Results of numerical experiments in Example \ref{AM_GA_exmpl1} for Algorithm \ref{AM_DP_alg_local_search_modification} with $m=1$}\label{AM_GA_tab1_2}
\begin{tabular*}{\textwidth}{@{\extracolsep\fill}lcccccc}
\toprule%
& \multicolumn{3}{@{}c@{}}{$\varepsilon=0.5$} & \multicolumn{3}{@{}c@{}}{$\varepsilon=0.75$} \\\cmidrule{2-4}\cmidrule{5-7}%
$\tau$ & $J$ & iter\footnotemark[1] & time\footnotemark[2] & $J$  &iter\footnotemark[1] &   time\footnotemark[2]  \\
\midrule
$1/16$   & 1.44010 & 45  & 0.58  & 1.43899   & 89  & 1.32  \\
$1/32$   & 1.43368 & 175  & 5.33 &  1.43370   & 414  & 14.9   \\
$1/64$ & 1.43308 & 509  & 37.9  & 1.43247 & 1433 & 166.47\\
\bottomrule
\end{tabular*}
\footnotetext[1]{Number of iterations.}
\footnotetext[2]{Time of computations in seconds.}
\end{table}

We see that the larger $\varepsilon$, the better Algorithm \ref{AM_DP_alg_local_search_modification} performs, especially in comparison to Algorithm \ref{AM_DP_alg_init}, which corresponds to Remark \ref{AM_AG_main_th_cor} and the estimates (\ref{AM_GA_estimate_arithm_op_init_alg_tau}) and (\ref{AM_GA_estimate_arithm_op_modif_alg_tau}).

The Ritz method produces a solution of the form 
$$y(x)=x+\sum_{k=1}^{10}a_k\sin{\frac{\pi k x}{l}},$$
where 
\begin{equation*}
\begin{split}
&a_1=-0.342929,\   a_2=0.132031,\   a_3=-0.083452,\   a_4=0.046821  ,\   a_5=-0.027623,\\   
&a_6=0.017216,\   a_7=-0.009948,\   a_8=0.005366,\   a_9=-0.002641,\   a_{10}=0.001030,
\end{split}
\end{equation*}
with a cost of $1.43743$ in $60.5$ seconds. Algorithm \ref{AM_DP_alg_local_search_modification} produces a solution of comparable quality in $1.32$ seconds (see Table \ref{AM_GA_tab1_2} for $\tau=1/16$, $\varepsilon=0.75$).

The graph of the optimal curve is shown in Figure \ref{AM_AG_fig_exmpl_2D}.

\begin{figure}[h]
\centering
\includegraphics[width=0.6\textwidth]{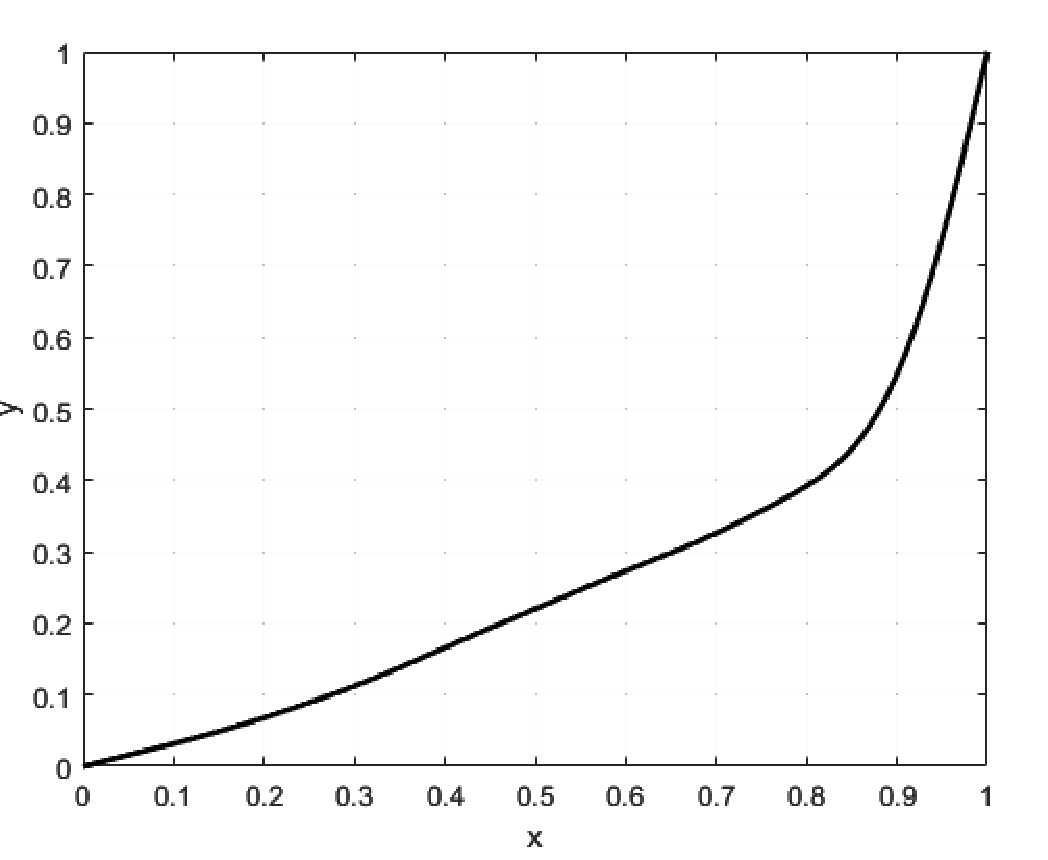}
\caption{A graph of the optimal solution in Example \ref{AM_GA_exmpl1}}\label{AM_AG_fig_exmpl_2D}
\end{figure}

\end{example}

\begin{example} \label{AM_GA_exmpl2}
Consider the problem of minimizing (\ref{AM_AG_J}) for $$\varphi(x,y)=\sin{5x}\sin{y},$$ 
$$\alpha(x,y)=0.1, \quad \beta(x,y)=0.5,$$
and boundary conditions $$y(0)=0, \ y(l)=1,$$
where $l=1$.

\begin{table}[h]
\caption{Results of numerical experiments in Example \ref{AM_GA_exmpl2} for Algorithm \ref{AM_DP_alg_init}}\label{AM_GA_tab2_1}
\begin{tabular*}{\textwidth}{@{\extracolsep\fill}lcccccccc}
\toprule%
& \multicolumn{2}{@{}c@{}}{$\varepsilon=0$} & \multicolumn{2}{@{}c@{}}{$\varepsilon=0.25$}& \multicolumn{2}{@{}c@{}}{$\varepsilon=0.5$}& \multicolumn{2}{@{}c@{}}{$\varepsilon=0.75$}  \\\cmidrule{2-3}\cmidrule{4-5}\cmidrule{6-7}\cmidrule{8-9}%
$\tau$ & $J$ & time\footnotemark[1] & $J$  &   time\footnotemark[1]  & $J$ &  time\footnotemark[1]& $J$  &   time\footnotemark[1]     \\
\midrule
$1/4$   & 1.17293 & 0.1  & 1.17506  & 0.2   & 1.15674 & 0.4   & 1.15300 & 0.9\\
$1/8$   & 1.17157 & 0.9  & 1.15596  & 2.8   & 1.14476 & 7.3   & 1.14032 & 20.2\\
$1/16$ & 1.16689 & 8.2  & 1.14852  & 32.7 & 1.13964 & 127.8& 1.13866 & 502.9\\
$1/32$ & 1.16406 & 67.4 & 1.14500 & 387.6& 1.13791 & 2156.7& 1.13750 & 11579.9\\
\bottomrule
\end{tabular*}
\footnotetext[1]{Time of computations in seconds.}
\end{table}

\begin{table}[h]
\caption{Results of numerical experiments in Example \ref{AM_GA_exmpl2} for Algorithm \ref{AM_DP_alg_local_search_modification} with $m=1$}\label{AM_GA_tab2_2}
\begin{tabular*}{\textwidth}{@{\extracolsep\fill}lcccccc}
\toprule%
& \multicolumn{3}{@{}c@{}}{$\varepsilon=0.5$} & \multicolumn{3}{@{}c@{}}{$\varepsilon=0.75$} \\\cmidrule{2-4}\cmidrule{5-7}%
$\tau$ & $J$ & iter\footnotemark[1] & time\footnotemark[2] & $J$  &iter\footnotemark[1] &   time\footnotemark[2]  \\
\midrule
$1/16$   & 1.13964 & 55  & 16.4  & 1.13866   & 108  & 32.5  \\
$1/32$   & 1.13791 & 164  & 101.7 &  1.13750   & 390  & 243.8   \\
$1/64$ & 1.13763 & 486  & 615.5  & 1.13719 & 1372 & 1873.7\\
\bottomrule
\end{tabular*}
\footnotetext[1]{Number of iterations.}
\footnotetext[2]{Time of computations in seconds.}
\end{table}

The Ritz method produces a solution of the form 
$$y(x)=x+\sum_{k=1}^{10}a_k\sin{\frac{\pi k x}{l}},$$
where 
\begin{equation*}
\begin{split}
&a_1=-0.370262,\   a_2=0.055788,\   a_3=0.010580,\   a_4=-0.008663  ,\   a_5=0.002984,\\   
&a_6=0.002658,\   a_7=-0.004272,\   a_8=0.004357,\   a_9=-0.002889,\   a_{10}=0.001418,
\end{split}
\end{equation*}
with a cost of $1.13763$ in $72.8$ seconds. 
The solution with a cost of $1.13711$, which is comparable to the best result of the local search modification of the algorithm (see Table \ref{AM_GA_tab2_2} for $\tau=1/64$, $\varepsilon=0.75$), can be obtained by the Ritz method for expansion with 30 terms in $2261.7$ seconds. 

\begin{figure}[h]
\centering
\includegraphics[width=0.85\textwidth]{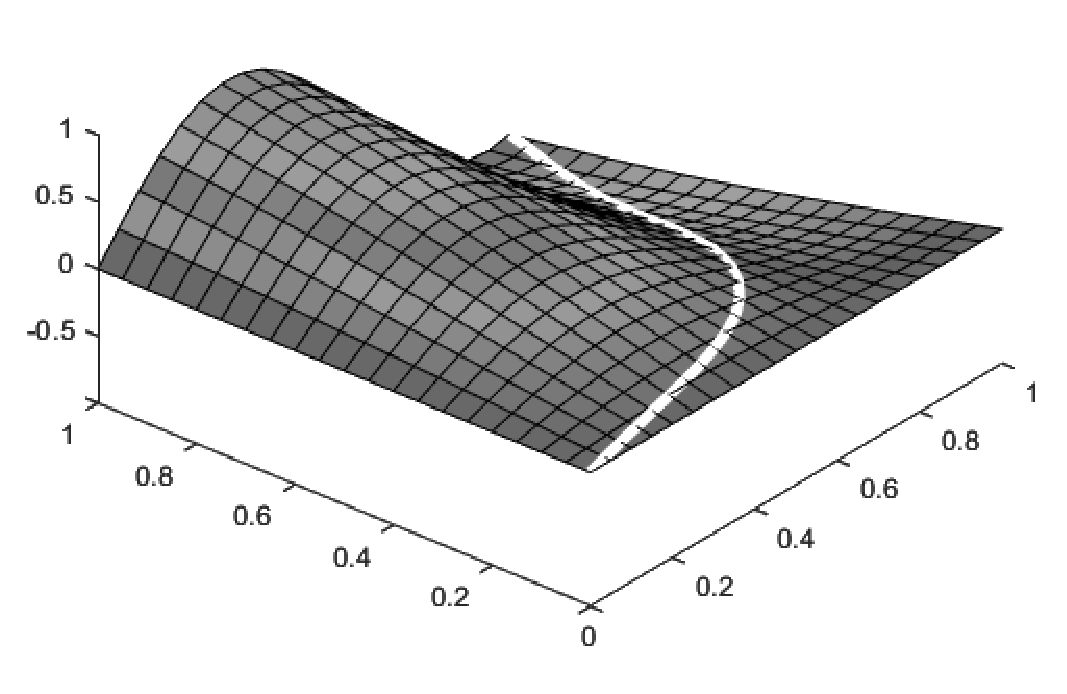}
\caption{A graph of the optimal solution in Example \ref{AM_GA_exmpl2}}\label{AM_AG_fig_exmpl_3D}
\end{figure}

The graph of the optimal curve is shown in Figure \ref{AM_AG_fig_exmpl_3D}. Since we are dealing with constants $\alpha$ and $\beta$ in this example, the length of the curve is the main factor to be minimized. Therefore, our solution expectedly avoids areas of high altitude.

\end{example}

\section{Conclusion}\label{sec13}

We consider the problem of obtaining the optimal trajectory in terms of construction cost that connects two points on the terrain. This problem is formulated as a calculus of variations problem, and we discuss the existence of a solution. We propose Algorithm \ref{AM_DP_alg_init}, which utilizes dynamic programming, and we prove its convergence in Theorem \ref{AM_AG_main_th}. Additionally, we analyze the computational complexity of the method, which is comparable to that of Dijkstra's algorithm due to the specific structure of the employed graph. While Algorithm \ref{AM_DP_alg_init} allows for the determination of a global solution to the problem, the number of required arithmetic operations increases rapidly as we enhance the required accuracy of the solution. Therefore, we also construct Algorithm \ref{AM_DP_alg_local_search_modification}, a modification of the initial algorithm that requires less computational effort, though this comes at the cost of generally providing only a local solution. Nevertheless, our numerical experiments demonstrate that Algorithm \ref{AM_DP_alg_local_search_modification} produces solutions of comparable quality to those obtained with Algorithm \ref{AM_DP_alg_init}.

The results obtained enable the identification of a suitable law for increasing grid density, facilitating the achievement of a solution with a specified level of accuracy. In contrast to the widely used Dijkstra's algorithm in this field, the method developed and analyzed in this study offers a relatively straightforward computational approach to obtaining a solution to the original problem with the desired accuracy.

It is important to note that the methods described in this paper do not undergo significant changes when applied to problems with constraints. For such problems, it is necessary to exclude points corresponding to "forbidden" regions from the sets $G_i$, where $i=0,\dots,n$.

\end{document}